\newtheorem{theorem}{Theorem}
\newtheorem{proposition}[theorem]{Proposition}
\newtheorem{lemma}[theorem]{Lemma}
\theoremstyle{definition}
\theoremstyle{remark}
\newtheorem{remark}[theorem]{Remark}
\numberwithin{equation}{section}
\begin{document}

\title{Maximal Fuchsian subgroups of the $d=2$ Bianchi group}
\author{Anthony Lee}
\address{KAIST Department of Mathematics, Daejeon, South Korea}
\email{anthonylee@kaist.ac.kr}

\begin{abstract}
    Let $\Gamma$ denote the $d = 2$ Bianchi group $\operatorname{PSL}(2,\mathbb{Z}[\sqrt{-2}])$. We give an explicit description of all conjugacy classes of maximal nonelementary Fuchsian subgroups of $\Gamma$ as integral orders of certain indefinite quaternion algebras over $\mathbb{Q}$. Using this description, we also provide the covolumes corresponding to each conjugacy class. As an application, we compute the limit $\lim_{x\to\infty} \frac{\Pi(x)}{x}$ where $\Pi(x)$ counts the number of primitive totally geodesic immersed surfaces in $\Gamma\backslash\mathbb{H}^3$ with area less than $x$.    
\end{abstract}

\maketitle

\section{Introduction}

We identify the ideal points $\partial\mathbb{H}^3$ of hyperbolic 3-space $\mathbb{H}^3$ with the Riemann sphere $\mathbb{C} \cup \{\infty\}$. Under this identification, the action of $\operatorname{PSL}(2,\mathbb{C})$ on the boundary $\mathbb{C}$ can be described as linear fractional transformations. Let $\mathcal{O}_2$ denote the ring $\mathbb{Z}[\sqrt{-2}]$ and let $\Gamma$ be the $d = 2$ Bianchi group $\operatorname{PSL}(2,\mathcal{O}_2)$. To avoid cumbersome repetition, we will call a \[\text{maximal nonelementary Fuchsian subgroup of $\Gamma$}\] an \textit{$F$-subgroup}. In this paper, we provide an explicit description of the conjugacy classes of $F$-subgroups of $\Gamma$ as $\mathbb{Z}$-orders in certain indefinite quaternion algebras, and through this, we provide the volume formulae corresponding to each $F$-subgroup.

Such an explicit description is possible because up to conjugacy, every $F$-subgroup can be described as a subgroup of $\Gamma$ which, through the action described above, stabilizes a unique circle $\mathcal{C}$ in $\mathbb{C}$ whose explicit equation is known. The equation of such a circle can be represented uniquely in a \textit{reduced form} \[A|z|^2 + B\overline{z} + \overline{B}z + C = 0,\] where $A, C \in \mathbb{Z}$ and $B \in \mathcal{O}_2$, and furthermore \[\operatorname{gcd}(A,\operatorname{Re}(B), \operatorname{Im}(B), C) = 1.\] For a reduced form, we define its discriminant $D$ to be $B\overline{B} - AC$, which should be a positive integer. The explicit equations defining the circles $\mathcal{C}$ corresponding to each $\operatorname{PGL}(2,\mathcal{O}_2)$-conjugacy class of $F$-subgroups can be found using the results of \cite{Vulakh_1991}.

On the other hand, results of \cite{e5932844-3dc8-37ed-b4a8-b4d26ff81211} provide the exact number $n_2(D)$ of the $\Gamma$-conjugacy classes of $F$-subgroups whose corresponding circles $\mathcal{C}$ have discriminant $D$, and these numbers depend only on the congruence class of $D$ modulo some fixed integer. Note that $n_2(D) < \infty$ is ensured by \cite{Maclachlan_Reid_1991}. Since these numbers agree with the number of formulae found above, all conjugacy classes of $F$-subgroups are fixed under the $\operatorname{PGL}(2,\mathcal{O}_2)$-action. 

\begin{remark}As mentioned in \cite{e5932844-3dc8-37ed-b4a8-b4d26ff81211}, such an agreement may not occur for other Bianchi groups in general.\end{remark}

Using these explicit equations, we then find a description of their corresponding $F$-subgroups as $\mathbb{Z}$-orders in quaternion algebras. Let \[Q=(-2,D)_{\mathbb{Q}}\] be the indefinite quaternion algebra  over $\mathbb{Q}$ with standard basis $1, i, j, ij$ where $i^2 = -2$, $j^2 = D$ and $ij = -ji$. Here, $D \in \mathbb{Z}_{>0}$ is the discriminant of the $\mathcal{C}$ corresponding to the $F$-subgroup we will realize. It is possible to find a $\mathbb{Z}$-order $\mathcal{M}$ in $Q$ with respect to some matrix representation \[\rho : Q\otimes\mathbb{C} \to \operatorname{M}_2(\mathbb{C})\] such that $\rho(\mathcal{M}^1)$ becomes the $F$-subgroup we wanted to represent. We denote as $\mathcal{M}^1$ the group of elements of reduced norm 1 in $\mathcal{M}$. \begin{theorem}\label{mainthm} Let $D$ be the discriminant of the equation of the circle corresponding to an $F$-subgroup. Then each conjugacy class of $F$-subgroups can be represented as the group of reduced norm 1 elements of the following $\mathbb{Z}$-orders in $Q$:\begin{enumerate}
  \item $\mathbb{Z}[1,i,j,ij]$ for any positive integer $D$,
  \item $\mathbb{Z}[1,i,\frac{1+j}{2},\frac{i+ij}{2}]$ whenever $D \equiv 1 \mod 4$,

  \item $\mathbb{Z}[1,i,\frac{3i+j}{4},\frac{2+ij}{4}]$ whenever $D \equiv 2 \mod 16$,
  \item $\mathbb{Z}[1,i,\frac{i+j+ij}{4},\frac{2+2i+ij}{4}]$ whenever $D \equiv 6 \mod 16$,

  \item $\mathbb{Z}[1,i,\frac{i+j}{2},\frac{ij}{2}]$ whenever $D \equiv 2 \mod 4$,
  \item $\mathbb{Z}[1,i,\frac{1+j+ij}{2},\frac{i+ij}{2}]$ whenever $D \equiv 3 \mod 4$.
\end{enumerate}
\end{theorem}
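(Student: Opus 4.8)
The plan is to pass from a circle $\mathcal{C}$ to a quaternion algebra via its defining Hermitian form, to recognize the $\Gamma$-stabilizer of $\mathcal{C}$ inside that algebra as the norm-one unit group of the $\mathbb{Z}$-order cut out by $\operatorname{M}_2(\mathcal{O}_2)$, and then to evaluate that order on each of the finitely many circles listed in \cite{Vulakh_1991}.

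First I would fix the dictionary between circles and algebras. To a reduced form $A|z|^2 + B\overline{z} + \overline{B}z + C$ attach the Hermitian matrix $H = \bigl(\begin{smallmatrix} A & B \\ \overline{B} & C\end{smallmatrix}\bigr)$ and the inversion matrix $R = \bigl(\begin{smallmatrix} 0 & -1 \\ 1 & 0\end{smallmatrix}\bigr)H = \bigl(\begin{smallmatrix} -\overline{B} & -C \\ A & B\end{smallmatrix}\bigr)$, so that inversion in $\mathcal{C}$ is $z \mapsto \frac{-\overline{B}\,\overline{z} - C}{A\,\overline{z} + B}$, with $R\overline{R} = \overline{R}R = D\cdot I$ and $\det R = -D$. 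A matrix $g \in \operatorname{SL}(2,\mathbb{C})$ preserves $\mathcal{C}$ exactly when it conjugates this inversion to itself, and taking determinants this forces $gR\overline{g}^{-1} = \pm R$; accordingly set \[\mathcal{A} \;=\; \{\, g \in \operatorname{M}_2(\mathbb{Q}(\sqrt{-2})) : gR = R\overline{g}\,\}.\] Galois descent applied to the involution $g \mapsto R\overline{g}R^{-1}$ shows that $\mathcal{A}$ is a quaternion algebra over $\mathbb{Q}$, split over $\mathbb{Q}(\sqrt{-2})$ and split at $\infty$ because $D > 0$, on which the reduced trace and norm are the restrictions of $\operatorname{tr}$ and $\det$. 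For each circle on the list I would then exhibit two explicit anticommuting elements of $\mathcal{A}$ of reduced trace $0$ and reduced norms $2$ and $-D$; this identifies $\mathcal{A}$ with $Q = (-2,D)_{\mathbb{Q}}$ and produces the representation $\rho$ of the theorem as (the $\mathbb{C}$-linear extension of) the inverse embedding.

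Second I would identify the group. Since $gR = R\overline{g}$ forces $\det g = \overline{\det g} \in \mathbb{Q}$, a matrix of $\operatorname{SL}(2,\mathcal{O}_2)$ lies in $\mathcal{A}$ precisely when it preserves $\mathcal{C}$ with the $+$ sign, and conversely every reduced-norm-one element of $\operatorname{M}_2(\mathcal{O}_2) \cap \rho(\mathcal{A})$ lies in $\Gamma$ and fixes $\mathcal{C}$ (indeed each complementary disk). Hence $\rho$ carries $\mathcal{M}^1$ isomorphically onto the subgroup $\operatorname{Stab}^{+}_{\Gamma}(\mathcal{C})$ of elements fixing each of the two disks bounded by $\mathcal{C}$, where $\mathcal{M} := \rho^{-1}\bigl(\operatorname{M}_2(\mathcal{O}_2) \cap \rho(\mathcal{A})\bigr)$ is a $\mathbb{Z}$-order in $Q$. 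Since a nonelementary Fuchsian subgroup of $\Gamma$ stabilizes a unique hyperbolic plane, every $F$-subgroup arises as such a $\operatorname{Stab}^{+}_{\Gamma}(\mathcal{C}) = \rho(\mathcal{M}^1)$; one separately checks for the relevant planes that $\Gamma$ contains no element interchanging their two sides, so that no index-two subgroup is being lost (this is also consistent with the count recalled below).

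Finally comes the case analysis, which I expect to be the main obstacle. Running through the reduced equations of \cite{Vulakh_1991}, for the baseline circle one finds $\mathcal{M} = \mathbb{Z}[1,i,j,ij]$, giving case (1) for every $D$ occurring. When $D$ lies in one of the listed congruence classes, the coefficients $(A,B,C)$ of the corresponding reduced form satisfy extra $2$-adic and $4$-adic divisibility relations among $A,\operatorname{Re}(B),\operatorname{Im}(B),C$ forced by the value of $D \bmod 16$; feeding these into $\mathcal{M} = \rho^{-1}\bigl(\operatorname{M}_2(\mathcal{O}_2)\cap\rho(\mathcal{A})\bigr)$ yields a strictly larger order, which a direct computation matches with the appropriate one of (2)--(6). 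One must also verify that each displayed generating set is genuinely closed under multiplication and has integral reduced traces and norms for those $D$, so that it really defines an order; the delicate part is the $2$-adic bookkeeping that distinguishes cases (3), (4) and (5) and pins down the half- and quarter-integral generators. Comparing, for each residue of $D$, the collection of orders so obtained with the number $n_2(D)$ of \cite{e5932844-3dc8-37ed-b4a8-b4d26ff81211} (finite by \cite{Maclachlan_Reid_1991}) then confirms that the reduced-norm-one groups of these six orders exhaust the conjugacy classes of $F$-subgroups.
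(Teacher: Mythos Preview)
Your proposal is correct and lands on the same case-by-case computation as the paper, but the setup is packaged a bit differently. The paper fixes once and for all the representation
\[
\rho:\ i\mapsto\begin{pmatrix}\sqrt{-2}&0\\0&-\sqrt{-2}\end{pmatrix},\quad
j\mapsto\begin{pmatrix}0&D\\1&0\end{pmatrix},
\]
verifies directly that $P\rho(\mathbb{Z}[1,i,j,ij]^1)=\operatorname{Stab}(|z|^2-D,\Gamma)$, and then for each of Vulakh's circles chooses an explicit $T\in\operatorname{PSL}(2,\mathbb{C})$ carrying $|z|^2=D$ to that circle; the order is read off from the integrality constraints on $t,x,y,z$ in the entries of $T^{-1}\rho(t+xi+yj+zij)T$. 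Your route instead builds the quaternion algebra intrinsically from the circle via the involution $g\mapsto R\bar g R^{-1}$ and Galois descent, and then intersects with $\operatorname{M}_2(\mathcal{O}_2)$. These are equivalent---your choice of two anticommuting trace-zero elements is exactly the paper's choice of $T$ in disguise---but your framing explains \emph{why} $(-2,D)_{\mathbb{Q}}$ appears and why the stabilizer is automatically a norm-one unit group, whereas the paper simply writes down $\rho$ and checks. The paper's version has the virtue that the conjugated matrices $\rho'(t+xi+yj+zij)$ are displayed explicitly for each $(A,B)$, so the $2$-adic congruences on $t,x,y,z$ (and hence the half- and quarter-integral generators distinguishing cases (2)--(6)) are obtained by inspection rather than by solving $gR=R\bar g$ in each case. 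One small point: the side-swapping issue you flag is not discussed in the paper, which simply asserts $P\rho(\mathcal{M}^1)=\operatorname{Stab}(f_{k,c},\Gamma)$; your observation that the count against $n_2(D)$ closes this off is a welcome addition.
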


Furthermore, this explicit description allows us to compute the covolumes of the corresponding $F$-subgroups, which are given below. Throughout the paper, the symbol $\left(\frac{-2}{p}\right)$ is defined to be zero for $p = 2$, and will denote the Legendre symbol for $p>2$.

\begin{theorem}\label{volumes}
  Let $\mathcal{M}_{(i)}$ be the (i)-th $\mathbb{Z}$-order in Theorem \ref{mainthm}. For simplicity, let $F(D)$ denote $D\prod_{p|D}\left(1+\left(\frac{-2}{p}\right)p^{-1}\right)$ for positive integers $D$. Then the covolumes of the groups $\mathcal{M}_{(i)}^1/\{\pm 1\} \subset \operatorname{PSL}(2,\mathbb{R})$ are given as $c\pi F(D)$, where the values of $c$ in each case are given as follows:\begin{itemize}
    \item[] $\mathcal{M}_{(1)}$: $c = 1$ when $D \equiv 0 \mod 8$ and $c = 2$ otherwise,

    \item[] $\mathcal{M}_{(2)}$: $c = 1$ when $D \equiv 1 \mod 8$ and $c = \frac{1}{3}$ when $D \equiv 5 \mod 8$,

    \item[] $\mathcal{M}_{(3)},\mathcal{M}_{(4)}$: $c = \frac{1}{6}$ regardless of the congruence class of $D$,
    \item[] $\mathcal{M}_{(5)}$: $c = \frac{1}{2}$,
    
    \item[] $\mathcal{M}_{(6)}$: $c = 1$ when $D \equiv 3 \mod 8$, and $c = \frac{1}{3}$ when $D \equiv 7 \mod 8$.
  \end{itemize}
\end{theorem}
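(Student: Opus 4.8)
The plan is to apply the classical volume formula for unit groups of orders in indefinite quaternion algebras over $\mathbb{Q}$ (Shimizu, Borel; compare \cite{Maclachlan_Reid_1991}) and reduce the computation to purely local questions. Write $\mathfrak{d}_Q$ for the product of the finite primes ramifying in $Q=(-2,D)_{\mathbb{Q}}$, and for a given order $\mathcal{M}=\mathcal{M}_{(i)}$ fix a maximal order $\mathcal{O}\supseteq\mathcal{M}$. Since $Q$ is indefinite, $P\mathcal{O}^{1}$ and $P\mathcal{M}^{1}$ are Fuchsian of finite covolume, $\operatorname{covol}(P\mathcal{O}^{1})=\tfrac{\pi}{3}\prod_{p\mid\mathfrak{d}_Q}(p-1)$, and hence
\[
\operatorname{covol}(P\mathcal{M}^{1})=\frac{\pi}{3}\Bigl(\prod_{p\mid\mathfrak{d}_Q}(p-1)\Bigr)\,[\mathcal{O}^{1}:\mathcal{M}^{1}],
\]
where passing from $\mathcal{M}^{1}$ to $\mathcal{M}^{1}/\{\pm1\}\subset\operatorname{PSL}(2,\mathbb{R})$ is harmless since $-1\in\mathcal{M}^{1}$. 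I would then invoke strong approximation for $\operatorname{SL}_{1}(Q)$ — valid because $\operatorname{SL}_{1}(Q)(\mathbb{R})\cong\operatorname{SL}(2,\mathbb{R})$ is noncompact — to see that $\mathcal{O}^{1}$ surjects onto $\prod_{p}\mathcal{O}_{p}^{1}/\mathcal{M}_{p}^{1}$ with kernel $\mathcal{M}^{1}$, so $[\mathcal{O}^{1}:\mathcal{M}^{1}]=\prod_{p}[\mathcal{O}_{p}^{1}:\mathcal{M}_{p}^{1}]$ is a finite product of local indices (subscript $p$ meaning $\otimes_{\mathbb{Z}}\mathbb{Z}_{p}$). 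The theorem then splits into two local tasks: (I) identify $\mathfrak{d}_Q$, and (II) compute each $[\mathcal{O}_{p}^{1}:\mathcal{M}_{(i),p}^{1}]$.

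For (I): the reduced norm form of $Q$ is $\langle 1,2,-D,-2D\rangle$, so $Q$ splits at $\infty$ (using $D>0$) and at every odd $p\nmid 2D$; for odd $p\mid D$ one has $(-2,D)_{p}=\bigl(\tfrac{-2}{p}\bigr)^{v_{p}(D)}$, so $Q$ ramifies there exactly when $v_{p}(D)$ is odd and $\bigl(\tfrac{-2}{p}\bigr)=-1$. The delicate place is $p=2$: the $2$-adic Hilbert symbol $(-2,D)_{2}$ depends only on $D\bmod16$ together with $v_{2}(D)$, and evaluating it case by case one checks that the ramification pattern lines up precisely with the congruence classes of $D$ occurring in Theorem \ref{mainthm}. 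This expresses $\prod_{p\mid\mathfrak{d}_Q}(p-1)$ as an explicit function of $D$.

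For (II): at an odd prime $p\nmid D$ every one of the six orders localizes to the maximal order $\operatorname{M}_{2}(\mathbb{Z}_{p})$ — the fractional generators in (2)--(6) have $2$-power denominators, and $\mathbb{Z}_{p}[1,i,j,ij]$ is $p$-maximal because $\langle 1,2,-D,-2D\rangle$ is $\mathbb{Z}_{p}$-unimodular — so the local index is $1$. At an odd prime $p\mid D$ the six orders still all localize to $\mathbb{Z}_{p}[1,i,j,ij]$; writing $k=v_{p}(D)$ and $\varepsilon=\bigl(\tfrac{-2}{p}\bigr)$, I would analyze this order through its degenerate residue algebra, whose semisimple quotient $\mathbb{F}_{p}[\sqrt{-2}]$ is $\mathbb{F}_{p^{2}}$ or $\mathbb{F}_{p}\times\mathbb{F}_{p}$ according to $\varepsilon$, to obtain $[\mathcal{O}_{p}^{1}:(\mathbb{Z}_{p}[1,i,j,ij])^{1}]=p^{\,k-1}$ when $Q$ ramifies at $p$ (for $k=1$ the order is already the maximal order of the $p$-adic division algebra) and $p^{\,k-1}(p+\varepsilon)$ otherwise (an Eichler order of level $p^{k}$ when $\varepsilon=1$, a non-Eichler Bass order when $\varepsilon=-1$). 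Multiplying these and folding in the $(p-1)$ contributed to $\prod_{p\mid\mathfrak{d}_Q}(p-1)$ in the ramified subcase, the total contribution of each odd $p\mid D$ is $p^{\,k}\bigl(1+\varepsilon p^{-1}\bigr)$, which is exactly its $p$-part in $F(D)$.

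The heart of the matter — and the step I expect to be hardest — is $p=2$. There $-2$ is a nonunit, $2$ ramifies in $\mathbb{Z}[\sqrt{-2}]$, $Q_{2}$ is split or division depending on $D\bmod16$, and the fractional generators of $\mathcal{M}_{(2)},\dots,\mathcal{M}_{(6)}$ (with denominators $2$ or $4$) are chosen precisely so that, in each admissible residue class, $\mathcal{M}_{(i),2}$ is a prescribed over-order of $\mathbb{Z}_{2}[1,i,j,ij]$ lying in a $2$-adic maximal order $\mathcal{O}_{2}$. For each of the six orders and each admissible class of $D\bmod16$ I would locate $\mathcal{M}_{(i),2}$ explicitly inside $\mathcal{O}_{2}$ — conveniently via a $\mathbb{Z}_{2}$-basis change into a normal form, or via the ternary quadratic form invariant attached to the order — and then compute the norm-$1$ index $[\mathcal{O}_{2}^{1}:\mathcal{M}_{(i),2}^{1}]$, which in general differs from the module index $[\mathcal{O}_{2}:\mathcal{M}_{(i),2}]$ (the latter being read off from reduced discriminants). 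This $2$-adic bookkeeping is what produces the constant $c$ and the power of $2$ in $F(D)$. Finally I would multiply the contributions from (I) and (II), simplify to the closed form $c\,\pi\,F(D)$ in each of the six cases, and cross-check the answer against small discriminants — e.g. $D=1,2,3,6$ — where the Fuchsian groups and their fundamental domains can be written down directly.
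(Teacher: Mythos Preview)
Your plan is sound and would work, but it takes a genuinely different route from the paper. The paper never compares $\mathcal{M}_{(i)}$ to a maximal order; instead it invokes Voight's intrinsic volume formula
\[
\operatorname{vol}(P\mathcal{M}^{1}\backslash\mathbb{H}^{2})=\frac{\pi\,N(\mathcal{M})}{3}\prod_{p\mid N(\mathcal{M})}\lambda(\mathcal{M},p)\,[\mathbb{Z}_{p}^{\times}:\operatorname{nrd}(\mathcal{M}_{p}^{\times})]^{-1},
\]
where $N(\mathcal{M})$ is the reduced discriminant and $\lambda(\mathcal{M},p)=(1-p^{-2})\big/\bigl(1-\bigl(\tfrac{\mathcal{M}}{p}\bigr)p^{-1}\bigr)$ involves the Eichler symbol. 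The odd-prime factors collapse for the same reason you found---all six orders localize to $\mathbb{Z}_{p}[1,i,j,ij]$ at $p\neq2$, the Eichler symbol there is $\bigl(\tfrac{-2}{p}\bigr)$, and $\operatorname{nrd}$ surjects onto $\mathbb{Z}_{p}^{\times}$---so only the $p=2$ data remains. But that data is packaged differently: rather than embedding $\mathcal{M}_{(i),2}$ in a $2$-adic maximal order and computing $[\mathcal{O}_{2}^{1}:\mathcal{M}_{(i),2}^{1}]$, the paper computes the Eichler symbol at $2$ via Voight's Kronecker-symbol criterion on $\Delta(\alpha)=\operatorname{trd}(\alpha)^{2}-4\operatorname{nrd}(\alpha)$ modulo~$8$, and separately pins down the image $\operatorname{nrd}(\mathcal{M}_{(i),2}^{\times})$ among the index-$\le4$ subgroups of $\mathbb{Z}_{2}^{\times}$ by evaluating $\operatorname{nrd}$ on a handful of explicit elements modulo~$8$. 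Your approach is conceptually cleaner in separating the algebra's contribution $\prod_{p\mid\mathfrak{d}_Q}(p-1)$ from the order's, but it obliges you to construct explicit $2$-adic maximal orders over each $\mathcal{M}_{(i),2}$ and then a norm-$1$ unit index---nontrivial for the denominator-$4$ cases $(3)$ and $(4)$; the paper's approach sidesteps this entirely by reading two coarser local invariants directly off the generators, at the price of citing a less classical mass formula.
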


\begin{remark}
Let $K = \mathbb{Q}(\sqrt{-d})$ for positive squarefree $d$. The $\mathbb{Z}$-orders and corresponding covolumes of maximal nonelementary Fuchsian subgroups of $\operatorname{PSL}(2,\mathcal{O}_K)$ are given in \cite{Maclachlan_Reid_1991} for the $d = 1$ case, and in \cite{JUNG2019160} for $d \equiv 3 \mod 4$ under the assumption that the ideal class group of $K$ does not contain any element of order 4. Furthermore a prime geodesic theorem for such $d$ has been given in \cite{JUNG2019160}. However, the precise methods of \cite{JUNG2019160} except Lemma \ref{analyticlemma} cannot be extended to the $d = 2$ case, as there are orders with basis elements having denominators a higher power of 2 (namely, $\mathcal{M}_{(3)}$ and $\mathcal{M}_{(4)}$). We also give a detailed explanation on the 2-adic reduced norm group calculation in Section \ref{nrd} which complements the volume formula discussion of both \cite{Maclachlan_Reid_1991} and \cite{JUNG2019160}.
\end{remark}

A simple application of the results of Theorem \ref{volumes} and Lemma \ref{analyticlemma} yields an asymptotic formula for the growth of primitive immersed totally geodesic surfaces in the hyperbolic 3-fold $\Gamma\backslash\mathbb{H}^3$.  \begin{theorem}Let $\Pi(x)$ denote the number of primitive immersed totally geodesic surfaces in $\Gamma\backslash\mathbb{H}^3$ with area less than $x$. Then,\begin{equation}\label{asymp}\lim_{x\to\infty}\frac{\Pi(x)}{x} = \frac{45}{16\pi}\cdot\prod_{p>2}\left(1-\frac{1}{p}+\frac{1}{p+\left(\frac{-2}{p}\right)}\right)\end{equation} where $\left(\frac{-2}{p}\right)$ is the Legendre symbol.
\end{theorem}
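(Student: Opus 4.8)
The plan is to convert $\Pi(x)$ into an arithmetic counting problem for the single multiplicative function $F$ of Theorem~\ref{volumes} and then feed it into the Tauberian input of Lemma~\ref{analyticlemma}. It is standard that primitive immersed totally geodesic surfaces in $\Gamma\backslash\mathbb{H}^3$ are in bijection with $\Gamma$-conjugacy classes of $F$-subgroups, the area of the surface being the covolume in $\operatorname{PSL}(2,\mathbb{R})$ of the corresponding $F$-subgroup; so $\Pi(x)$ counts conjugacy classes of $F$-subgroups of covolume $<x$. By the discussion preceding Theorem~\ref{mainthm} together with Theorem~\ref{mainthm}, these conjugacy classes are indexed by pairs $(D,i)$ with $D$ a positive integer and $i\in\{1,\dots,6\}$, the pair being admissible exactly when $D$ meets the congruence attached to $\mathcal{M}_{(i)}$ (item $(1)$ being admissible for every $D$, and the number of admissible $i$ for a fixed $D$ matching the $n_{2}(D)$ of \cite{e5932844-3dc8-37ed-b4a8-b4d26ff81211}); by Theorem~\ref{volumes} the covolume attached to $(D,i)$ equals $c_{(i)}(D)\,\pi F(D)$, with $c_{(i)}(D)$ the explicit rational recorded there. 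Hence $\Pi(x)=\sum_{i=1}^{6}\Pi_i(x)$, where $\Pi_i(x)$ counts admissible $D$ with $c_{(i)}(D)\pi F(D)<x$, and each $\Pi_i$ splits into finitely many subcounts along the residue classes of $D$ on which $c_{(i)}$ is constant.

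Next I would study $F$. It is multiplicative with $F(p^{k})=p^{k}+\left(\frac{-2}{p}\right)p^{k-1}$ for $k\ge 1$; in particular, since $\left(\frac{-2}{2}\right)=0$, one has $F(2^{a}D')=2^{a}F(D')$ for $D'$ odd, and
\[
\sum_{D\ge 1}F(D)^{-s}=\zeta(s)\prod_{p}\left(1-p^{-s}+\left(1+p^{-1}\left(\frac{-2}{p}\right)\right)^{-s}p^{-s}\right),
\]
where the product converges for $\operatorname{Re}(s)>0$ and takes at $s=1$ the value $L:=\prod_{p>2}\left(1-\frac1p+\frac1{p+\left(\frac{-2}{p}\right)}\right)$, the factor at $p=2$ being $1$. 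Every admissibility congruence in Theorem~\ref{mainthm} and every ``$c_{(i)}$ constant'' congruence in Theorem~\ref{volumes} is modulo a power of $2$, so writing $D=2^{a}D'$ with $D'$ odd it becomes ``$v_{2}(D)=a$'' (the $2$-adic valuation) plus a congruence $D'\equiv a'\,(\mathrm{mod}\ 2^{r})$ on the odd part. Because the $2$-adic Euler factor of $F$ is trivial, this progression condition decouples from $F$ at the odd primes; twisting by a nontrivial character modulo $2^{r}$ replaces $\zeta$ by a Dirichlet $L$-function holomorphic and nonzero at $s=1$, so by Lemma~\ref{analyticlemma}
\[
\#\{\,D' \text{ odd},\ D'\equiv a'\,(\mathrm{mod}\ 2^{r})\ :\ F(D')\le Z\,\}\ \sim\ \frac{L}{2^{r}}\,Z\qquad(r\ge 1),
\]
and in particular $\#\{D' \text{ odd}:F(D')\le Z\}\sim\frac{L}{2}Z$.

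I would then assemble the count. For an admissible $(D,i)$, extract the forced $2$-power $D=2^{a}D'$; the inequality $c_{(i)}(D)\pi F(D)<x$ becomes $F(D')<x/\kappa$ with $\kappa=2^{a}c_{(i)}(D)\pi$ explicit, so the associated subcount is asymptotic to $\frac{L}{2^{r}}\cdot\frac{x}{\kappa}$ by the previous step; for $i=1$ one additionally sums the geometric series over $v_{2}(D)\ge 3$ (where $c_{(1)}=1$), the cases $v_{2}(D)\in\{0,1,2\}$ carrying $c_{(1)}=2$. This mechanical bookkeeping yields $\Pi_{1}(x)\sim\frac{9}{16}\cdot\frac{L}{\pi}x$, $\Pi_{2}(x)\sim\Pi_{5}(x)\sim\Pi_{6}(x)\sim\frac12\cdot\frac{L}{\pi}x$, and $\Pi_{3}(x)\sim\Pi_{4}(x)\sim\frac{3}{8}\cdot\frac{L}{\pi}x$, so that
\[
\lim_{x\to\infty}\frac{\Pi(x)}{x}=\left(\frac{9}{16}+\frac12+\frac38+\frac38+\frac12+\frac12\right)\frac{L}{\pi}=\frac{45}{16}\cdot\frac{L}{\pi},
\]
which is \eqref{asymp} upon unwinding the definition of $L$.

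The genuinely delicate ingredient is the analytic one: the continuation of $\sum_{D}F(D)^{-s}$ slightly past $\operatorname{Re}(s)=1$, the identification of the residue at $s=1$ with $L$, and the Tauberian transition to a main term with a controlled error (together with the analogues in $2$-power progressions). This is exactly what Lemma~\ref{analyticlemma}, the one tool retained from \cite{JUNG2019160}, supplies. Granting it, the only points left to verify are that the conjugacy classes of $F$-subgroups really are in bijection with the admissible pairs $(D,i)$ with no overcounting — guaranteed by Theorem~\ref{mainthm} and the agreement of its class counts with $n_{2}(D)$ — and that the equidistribution of the odd part of $D$ over residue classes modulo a $2$-power survives weighting by $1/F$, which here is immediate since $\left(\frac{-2}{2}\right)=0$ makes the $2$-adic Euler factor of $F$ trivial, so the modulus does not interfere with the weight.
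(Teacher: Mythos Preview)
Your proof is correct and follows essentially the same route as the paper: identify primitive totally geodesic surfaces with $\Gamma$-conjugacy classes of $F$-subgroups, use Theorem~\ref{volumes} to rewrite $\Pi(x)$ as a finite sum of counts of the shape $\#\{D\equiv r\bmod 2^{a}:c\pi F(D)<x\}$, invoke Lemma~\ref{analyticlemma} on each, and add the resulting coefficients to obtain $\tfrac{45}{16\pi}L$. The only cosmetic difference is that for $\mathcal{M}_{(1)}$ you stratify by $v_{2}(D)$ and sum a geometric series, whereas the paper would simply list the eight residue classes modulo $8$; both give the contribution $\tfrac{9}{16}\cdot\tfrac{L}{\pi}$, and your extra Dirichlet-series discussion is a (harmless) re-justification of what Lemma~\ref{analyticlemma} already packages.
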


Together with the results of \cite{JUNG2019160} and \cite{Maclachlan_Reid_1991}, we have obtained a list of precise prime geodesic theorems for surfaces in the $d = 1, 2$ and $d \equiv 3 \mod 4$ Bianchi orbifolds under the assumption that the ideal class group of $\mathbb{Q}(\sqrt{-d})$ contains no order 4 element.

\begin{remark}
  The most general classification result for Bianchi groups, to date, is in \cite{inbook} which assumes that the class group contains no order 4 element. Furthermore, for $d \not\equiv 3 \mod 4$, one can see for example through the results of \cite{e5932844-3dc8-37ed-b4a8-b4d26ff81211} that already the $d=5$ case can have up to 9 conjugacy classes having the same discriminant, and moreover the congruence condition on the discriminant is given modulo 200. Hence, even for small $d$ it may be impractical to find the $\mathbb{Z}$-orders and covolumes using the methods of this paper.
\end{remark}

\subsection*{Acknowledgements} The author is deeply grateful to Prof.\ Alan Reid for his encouraging feedback and valuable suggestions regarding publication. The author would also like to thank Prof.\ Minju Lee for helpful discussions throughout the course of this project, and for suggesting improvements to this paper. The author also thanks Prof.\ Wansu Kim for comments on the local analysis at $p=2$. This project was greatly influenced by the work of J.\ Jung \cite{JUNG2019160}. This research was supported by grant No.\ G04250060.

\section{Obtaining the Integral Orders}

We start with a list of explicit formulae corresponding to $\operatorname{PGL}(2,\mathcal{O}_2)$-conjugacy classes of $F$-subgroups. This is a direct consequence of the classification result in \cite{Vulakh_1991}.\begin{proposition}[\cite{Vulakh_1991}, Theorem 3]\label{vulakh}
  Each $\operatorname{PGL}(2,\mathcal{O}_2)$-conjugacy class of $F$-subgroups of $\Gamma$ is represented uniquely by the group $\operatorname{Stab}(f_{k,c}(z), \Gamma)$ with $c \in \mathbb{Z}$, where the $f_{k,c}(z)$ are given as $(k)$: \begin{tabenum}
    \tabenumitem $|z|^2 + c$,
    \tabenumitem $2|z|^2 + \overline{z} + z + 2c$,

    \tabenumitem $4|z|^2 - \sqrt{-2}\overline{z} + \sqrt{-2}z + 4c$,
    \tabenumitem $4|z|^2 + (2-\sqrt{-2})\overline{z} + (2+\sqrt{-2})z + 4c$,
    
    \tabenumitem $2|z|^2 - \sqrt{-2}\overline{z} + \sqrt{-2}z + 2c$,
    \tabenumitem $2|z|^2 + (1-\sqrt{-2})\overline{z} + (1+\sqrt{-2})z + 2c$,
  \end{tabenum}all presented in reduced form. Here, $\operatorname{Stab}(f,\Gamma)$ is the subgroup of $\Gamma$ whose elements stabilize the circle $\mathcal{C}$ defined by $f$.
\end{proposition}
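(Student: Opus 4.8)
Since the statement is Theorem~3 of \cite{Vulakh_1991}, the plan is to recall the reduction-theoretic argument that lies behind it and to record the translation that the statement makes explicit. The core is reduction theory for primitive integral binary Hermitian forms over $\mathcal{O}_2$.

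First I would pass from $F$-subgroups to forms. A nonelementary Fuchsian subgroup of $\Gamma$ preserves a unique circle $\mathcal{C}\subset\partial\mathbb{H}^3$ (three points of its limit set determine $\mathcal{C}$), and maximality forces it to equal $\operatorname{Stab}(\mathcal{C},\Gamma)$; conversely $\operatorname{Stab}(\mathcal{C},\Gamma)$ is an $F$-subgroup precisely when it is a lattice in the copy of $\operatorname{PSL}(2,\mathbb{R})$ stabilizing $\mathcal{C}$, which by the results of \cite{Maclachlan_Reid_1991} happens exactly when $\mathcal{C}$ admits a reduced form $A|z|^2+B\bar z+\bar B z+C=0$ of positive discriminant $D=B\bar B-AC$. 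Encoding the form by the Hermitian matrix $H=\left(\begin{smallmatrix}A&B\\\bar B&C\end{smallmatrix}\right)$, an element $g\in\operatorname{GL}(2,\mathcal{O}_2)$ acts by $H\mapsto\overline{g}^{\,\mathrm t}Hg$; since $g^{-1}\in\operatorname{GL}(2,\mathcal{O}_2)$ this preserves integrality and primitivity, and since $\det g=\pm1$ it preserves $\det H=-D$. As conjugating $\operatorname{Stab}(\mathcal{C},\Gamma)$ by $g$ gives $\operatorname{Stab}(g\mathcal{C},\Gamma)$, the problem becomes the classification, for each positive integer $D$, of primitive integral binary Hermitian forms over $\mathcal{O}_2$ of discriminant $D$ up to $\operatorname{GL}(2,\mathcal{O}_2)$-equivalence, which is the same as $\operatorname{PGL}(2,\mathcal{O}_2)$-conjugacy of the stabilizers.

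Next I would run the reduction, alternating two moves: the shear $g=\left(\begin{smallmatrix}1&\beta\\0&1\end{smallmatrix}\right)$ with $\beta\in\mathcal{O}_2$ fixes $A$ and sends $B$ to $B+A\beta$, so $B$ can be reduced modulo $A$; and, using that $\mathcal{O}_2$ is norm-Euclidean, replacing $H$ by $\overline{g}^{\,\mathrm t}Hg$ for a $g$ carrying a suitable primitive vector to $e_1$ makes $|A|$ minimal in the orbit. A Hermite-type bound then forces $|A|\ll\sqrt D$, whence $|B|,|C|\ll\sqrt D$, so only finitely many reduced forms survive for each $D$; enumerating them and deleting $\operatorname{GL}(2,\mathcal{O}_2)$-coincidences leaves $f_{1,c}$ for every $D$ (with $c=-D$), together with $f_{2,c}$ when $D\equiv1\pmod{4}$, $f_{3,c}$ when $D\equiv2\pmod{16}$, $f_{4,c}$ when $D\equiv6\pmod{16}$, $f_{5,c}$ when $D\equiv2\pmod{4}$, and $f_{6,c}$ when $D\equiv3\pmod{4}$. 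The moduli $4$ and $16$ enter because $2$ ramifies as $2=-(\sqrt{-2})^2$, which forces the exceptional leading coefficients $A\in\{2,4\}$ and the half- and quarter-denominator shapes of $B$ visible in the families $(2)$--$(6)$.

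The delicate part is non-redundancy where discriminant ranges overlap: for instance when $D\equiv2\pmod{16}$ the families $(1),(3),(5)$ all occur, and one must check the corresponding forms lie in distinct $\operatorname{GL}(2,\mathcal{O}_2)$-orbits. Within a single family this is automatic, since $D$ is a strictly monotone function of $c$ and is an invariant; across families it is exactly where the clean $\mathbb{Z}[i]$-style reduction theory of \cite{Maclachlan_Reid_1991} fails and where most of the work in \cite{Vulakh_1991} lies. I therefore expect this $2$-primary bookkeeping — simultaneously showing that $(1)$--$(6)$ exhaust the orbits and that none of the exceptional forms with $A\in\{2,4\}$ collapse together — to be the main obstacle. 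Granting Theorem~3 of \cite{Vulakh_1991}, all that is left for us is the finite verification that Vulakh's normal forms, rewritten in reduced form, coincide with the $f_{k,c}$ above having the asserted discriminants.
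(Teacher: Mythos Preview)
The paper does not give its own proof of this proposition: it is stated with the attribution ``\cite{Vulakh_1991}, Theorem 3'' and introduced only by the sentence ``This is a direct consequence of the classification result in \cite{Vulakh_1991}.'' Your sketch is therefore already more than the paper provides; you correctly identify that the content is quoted from Vulakh and that the only residual task is the bookkeeping of matching Vulakh's normal forms to the six families $f_{k,c}$ and reading off their discriminants, which is exactly how the paper uses the result in the paragraph immediately following the proposition.

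Your outline of the underlying reduction theory for primitive integral binary Hermitian forms over $\mathcal{O}_2$ is a reasonable summary of what lies behind Vulakh's theorem, and your flagging of the $2$-primary orbit separation (distinguishing, e.g., families $(1),(3),(5)$ when $D\equiv 2\pmod{16}$) as the genuine difficulty is apt. Since the paper simply cites the result, there is nothing further to compare.
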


Observe that the discriminant $D$ in each case is $-c$, $1-4c$, $2-16c$, $6-16c$, $2-4c$, and $3-4c$, respectively. Furthermore from the results of \cite{e5932844-3dc8-37ed-b4a8-b4d26ff81211}, one can calculate that the numbers of $\Gamma$-conjugacy classes $n_2(D)$ are given as\[n_2(D) = \begin{cases}
  1 & \text{if $D \equiv 0$ mod 4},\\
  3 & \text{if $D \equiv 2$ or $6$ mod 16},\\
  2 & \text{otherwise,}
\end{cases}\] and these numbers agree with the number of formulae in \ref{vulakh} having corresponding discriminants. To find an explicit description of the $\mathbb{Z}$-orders associated to the stabilizers of each of the six equations, we start with the simplest case of $|z|^2 + c$ and use computations in this case to obtain a description of other cases.

\begin{lemma}\label{embedding}
  Let $Q$ denote the quaternion algebra $(-2,D)_{\mathbb{Q}}$ with standard basis $1$, $i$, $j$, $ij$. Define a matrix embedding $\rho : Q\otimes\mathbb{C} \to \operatorname{M}_2(\mathbb{C})$ of $Q$ by \[i \mapsto \begin{bmatrix}
    \sqrt{-2} & 0\\
    0 & -\sqrt{-2}
  \end{bmatrix},\hspace{1em} j \mapsto \begin{bmatrix}
    0 & D\\
    1 & 0
  \end{bmatrix},\hspace{1em} ij \mapsto \begin{bmatrix}
    0 & D\sqrt{-2}\\
    -\sqrt{-2} & 0
  \end{bmatrix}.\] Let $\mathcal{M}$ be the $\mathbb{Z}$-order $\mathbb{Z}[1,i,j,ij]$ inside $Q$. Then, the group $P\rho(\mathcal{M}^1) := \rho(\mathcal{M}^1)/\{\pm 1\} \subset \Gamma$ consists precisely of elements whose action stabilizes the circle $|z|^2-D = 0$.
\end{lemma}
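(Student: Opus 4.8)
The plan is to analyse $\operatorname{Stab}(\mathcal{C},\Gamma)$ through the Hermitian form attached to $\mathcal{C}$ together with a single intertwining identity for $\rho$. Throughout write $g^{*}=\overline{g}^{T}$ for the conjugate transpose of $g\in\operatorname{M}_2(\mathbb{C})$, and attach to $\mathcal{C}\colon|z|^2-D=0$ the Hermitian matrix $S=\operatorname{diag}(1,-D)$, which has signature $(1,1)$ since $D>0$. A point $z$ lies inside, on, or outside the disk bounded by $\mathcal{C}$ according to the sign of the value $|z|^2-D$ of the form $S$ at $(z,1)$; hence for $g\in\operatorname{SL}(2,\mathbb{C})$ one has $g^{*}Sg=\lambda S$ for a real scalar $\lambda$ as soon as $g$ preserves $\mathcal{C}$, necessarily $\lambda=\pm1$ by comparing determinants, with $\lambda=+1$ exactly when $g$ preserves each of the two disks -- equivalently, when $g$ acts by an orientation-preserving isometry on the hyperbolic plane above $\mathcal{C}$. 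Since a Fuchsian subgroup of $\Gamma$ lies in a conjugate of $\operatorname{PSL}(2,\mathbb{R})$, the relevant condition is $g^{*}Sg=S$. Also, each $\rho(x)$ with $x\in\mathcal{M}$ has entries in $\mathcal{O}_2$ and determinant $\operatorname{nrd}(x)$, so $P\rho(\mathcal{M}^1)\subseteq\Gamma$ needs no further argument.

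For the inclusion $P\rho(\mathcal{M}^1)\subseteq\operatorname{Stab}(\mathcal{C},\Gamma)$ I would first establish one intertwining identity: by a direct check on the basis $1,i,j,ij$ and the facts that $\rho$ is an algebra homomorphism while $g\mapsto g^{*}$ is an anti-homomorphism,
\[\rho(x)^{*}=S\,\rho(\overline{x})\,S^{-1}\qquad\text{for all }x\in Q\otimes\mathbb{R},\]
where $\overline{x}$ is the quaternion conjugate. Multiplying on the right by $S\rho(x)$ and using $\rho(\overline{x})\rho(x)=\operatorname{nrd}(x)I$ turns this into $\rho(x)^{*}S\rho(x)=\operatorname{nrd}(x)S$, so for $x\in\mathcal{M}^1$ we get $\rho(x)^{*}S\rho(x)=S$, i.e.\ $P\rho(x)$ stabilizes $\mathcal{C}$.

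For the reverse inclusion, read the same computation backwards to pin down the image of $\rho$: solving $g^{*}Sg=(\det g)S$ for a general $2\times2$ matrix $g$ forces its $(2,2)$-entry to be the conjugate of its $(1,1)$-entry and its $(1,2)$-entry to be $D$ times the conjugate of its $(2,1)$-entry, so that
\[\rho(Q\otimes\mathbb{R})=\Bigl\{\begin{bmatrix}p&D\overline{r}\\ r&\overline{p}\end{bmatrix}:p,r\in\mathbb{C}\Bigr\}=\bigl\{g\in\operatorname{M}_2(\mathbb{C}):g^{*}Sg=(\det g)S\bigr\}.\]
If $g\in\operatorname{SL}(2,\mathcal{O}_2)$ stabilizes $\mathcal{C}$, then $g^{*}Sg=S=(\det g)S$, so $g=\rho(x)$ for some $x\in Q$ with $\operatorname{nrd}(x)=\det g=1$; writing the $(1,1)$-entry of $g$ as $a+b\sqrt{-2}$ and the $(2,1)$-entry as $c-d\sqrt{-2}$, integrality forces $a,b,c,d\in\mathbb{Z}$, i.e.\ $x\in\mathcal{M}^1$. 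Hence $g\in\rho(\mathcal{M}^1)$, and the two subgroups of $\Gamma$ coincide.

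The computations above are all bounded and mechanical; the one step that needs genuine care is the reduction ``$g$ stabilizes $\mathcal{C}$'' $\Rightarrow$ $g^{*}Sg=S$. The alternative $g^{*}Sg=-S$ really does occur in $\operatorname{SL}(2,\mathcal{O}_2)$: for instance $z\mapsto -1/z$ when $D=1$, and more generally $g=\begin{bmatrix}\eta&-D\overline{\xi}\\ \xi&-\overline{\eta}\end{bmatrix}$ whenever $D\,N(\xi)-N(\eta)=1$ has a solution $\xi,\eta\in\mathcal{O}_2$ (here $N$ is the norm of $\mathbb{Q}(\sqrt{-2})$); such $g$ interchange the two disks bounded by $\mathcal{C}$, act by orientation-reversing isometries on the plane above it, and do not lie in $\rho(\mathcal{M}^1)$. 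So ``stabilizes $\mathcal{C}$'' is to be read as stabilizing $\mathcal{C}$ equipped with an orientation, i.e.\ as $\Gamma$ intersected with the appropriate conjugate of $\operatorname{PSL}(2,\mathbb{R})$; with this standard reading the proof is complete, and, after conjugating $S$ and $\rho$ suitably, the very same computation serves as the template for the other five orders in Theorem~\ref{mainthm}.
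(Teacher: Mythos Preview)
Your argument is correct, and at its core it coincides with the paper's: both identify the (orientation-preserving) stabilizer of $|z|^2-D=0$ inside $\operatorname{PSL}(2,\mathbb{C})$ with the matrices $\begin{pmatrix}a & Db\\ \overline{b} & \overline{a}\end{pmatrix}$ and then read off that intersecting with $\Gamma$ gives exactly $P\rho(\mathcal{M}^1)$. The difference is in packaging. The paper simply writes down that matrix shape, checks by hand that such matrices send the circle to itself, notes that $\rho(t+xi+yj+zij)$ has this shape with $\mathcal{O}_2$-entries, and declares the equality. You instead encode the circle by the Hermitian matrix $S=\operatorname{diag}(1,-D)$, prove the clean intertwining relation $\rho(x)^{*}=S\rho(\overline{x})S^{-1}$, and deduce $\rho(x)^{*}S\rho(x)=\operatorname{nrd}(x)S$; then you recover the explicit matrix shape by rewriting $g^{*}Sg=(\det g)S$ as the linear condition $g^{*}S=S\operatorname{adj}(g)$. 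This buys you two things the paper leaves implicit: an honest proof of the reverse inclusion $\operatorname{Stab}(\mathcal{C},\Gamma)\subseteq P\rho(\mathcal{M}^1)$, and a careful separation of the cases $g^{*}Sg=\pm S$, making precise that ``stabilizes $\mathcal{C}$'' must be read as preserving the oriented circle (equivalently, lying in the conjugate of $\operatorname{PSL}(2,\mathbb{R})$ fixing the plane above $\mathcal{C}$). Your closing remark that the same intertwining computation, conjugated by the appropriate $T$, handles the other five circles is exactly how the paper proceeds in the subsequent subsections.
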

\begin{proof}
  First notice that elements of the form \[\begin{bmatrix}
    a & Db\\
    \overline{b} & \overline{a}
  \end{bmatrix} \in \operatorname{PSL}(2,\mathbb{C})\] fix the circle $|z|^2 - D = 0$. More explicitly, $z$ is sent to \[z^\prime := \frac{az+Db}{\overline{b}z + \overline{a}}\] and it is straightforward to verify that $|z^\prime|^2 - D = 0$. Given an element $\alpha := t + xi + yj + zij$ in $\mathcal{M}$, we have \[\rho(\alpha) = \begin{bmatrix}
    t + x\sqrt{-2} & Dy + Dz\sqrt{-2}\\
    y - z\sqrt{-2} & t-x\sqrt{-2}
  \end{bmatrix}\] and observe that the image of reduced norm 1 elements are precisely inside $\operatorname{PSL}(2,\mathbb{C})$. Furthermore as $t,x,y,z \in \mathbb{Z}$, we conclude that \[P\rho(\mathcal{M}^1) = \operatorname{Stab}(|z|^2-D,\Gamma).\]
\end{proof}

Next, we find some $T \in \operatorname{PSL}(2,\mathbb{C})$ whose action on $\partial\mathbb{H}^3$ sends the circle $f_{1,c}$ to $f_{k,c}$. This allows us to easily describe the stabilizer groups using the fact that \[\operatorname{Stab}(f_{k,c},\operatorname{PSL}(2,\mathbb{C})) = T^{-1}\cdot\operatorname{Stab}(f_{1,c},\operatorname{PSL}(2,\mathbb{C}))\cdot T,\] and that taking the intersection with $\Gamma$ yields $\operatorname{Stab}(f_{k,c},\Gamma)$. Notice that this group is precisely $\rho(\mathcal{M}_{(k)}^1)/\{\pm 1\}$ according to our notation.

\begin{lemma}
  Let $A|z|^2 + B\overline{z} + \overline{B}z + C = 0$ be in reduced form. Then the element \[T = \begin{bmatrix}
    \sqrt{A} & \frac{B}{\sqrt{A}}\\
    0 & \frac{1}{\sqrt{A}}
  \end{bmatrix} \in \operatorname{PSL}(2,\mathbb{C})\] sends the circle to the one defined by $|z|^2 = D$, where $D$ is the discriminant of the reduced form.
\end{lemma}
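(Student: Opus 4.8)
The plan is to compute the action of $T$ directly as a linear fractional transformation and substitute into the defining equation. First I would record that $\det T = \sqrt{A}\cdot\tfrac{1}{\sqrt{A}} = 1$, so $T$ does represent an element of $\operatorname{PSL}(2,\mathbb{C})$, and that its action on $\partial\mathbb{H}^3 = \mathbb{C}\cup\{\infty\}$ sends $z$ to
\[
w \;=\; \frac{\sqrt{A}\,z + B/\sqrt{A}}{1/\sqrt{A}} \;=\; Az + B,
\]
so that the inverse transformation is $w \mapsto z = (w-B)/A$. Since Möbius transformations are bijections of the Riemann sphere carrying circles and lines to circles and lines, the image of the circle $f=0$ under $T$ is exactly the locus of $w$ for which $T^{-1}w$ lies on that circle. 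Hence it suffices to substitute $z = (w-B)/A$ and $\overline{z} = (\overline{w}-\overline{B})/A$ — the latter being valid because $A\in\mathbb{Z}$ is real — into $A|z|^2 + B\overline{z} + \overline{B}z + C = 0$ and simplify.

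The second step is the algebraic simplification. After clearing the common denominator $A$, the left-hand side becomes
\[
(w-B)(\overline{w}-\overline{B}) + B(\overline{w}-\overline{B}) + \overline{B}(w-B) + AC ,
\]
and upon expanding, the terms $-w\overline{B}$ and $+\overline{B}w$ cancel, the terms $-B\overline{w}$ and $+B\overline{w}$ cancel, and the constants collapse to $-B\overline{B}+AC$, leaving $|w|^2 - (B\overline{B}-AC) = |w|^2 - D$. Therefore $T$ carries the given circle onto $|w|^2 = D$, as claimed.

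I do not expect a real obstacle here beyond the bookkeeping; the only point worth a remark is the branch of $\sqrt{A}$, but since the only combination of the entries of $T$ that enters the action is $Az+B$, the choice of square root is immaterial (and in all six cases of Proposition \ref{vulakh} one has $A>0$, so $T$ may even be taken in $\operatorname{PSL}(2,\mathbb{R})$). One may also note for consistency that $D = B\overline{B}-AC$ is a positive integer by hypothesis, so $|w|^2 = D$ is a genuine circle, as it must be since $T$ is a bijection of $\partial\mathbb{H}^3$.
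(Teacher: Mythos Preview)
Your proof is correct and is essentially the same direct computation the paper carries out: the paper's one-line proof simply asks the reader to verify $(Az+B)(A\overline{z}+\overline{B})-D=0$, which is exactly your substitution read in the forward direction $w=Az+B$ rather than via $T^{-1}$. Your additional remarks on $\det T$, the irrelevance of the branch of $\sqrt{A}$, and $D>0$ are valid and harmless elaborations.
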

\begin{proof}
  By direct calculation, verify that $(Az + B)(A\overline{z} + \overline{B}) - D = 0$.
\end{proof}

The embedding $\rho$, as defined in Lemma \ref{embedding}, sends reduced norm 1 elements of the form $t + xi + yj + zij$ for $t,x,y,z \in \mathbb{R}$ to elements of the group $\operatorname{Stab}(f_{1,c},\operatorname{PSL}(2,\mathbb{C}))$. Thus we may conjugate the matrix representation $\rho$ to obtain another matrix representation $\rho^\prime = T^{-1}\rho T$ of $Q$, and this gives rise to $\operatorname{Stab}(f_{k,c},\operatorname{PSL}(2,\mathbb{C}))$. Explicitly, $\rho^\prime$ is given as follows:\[\rho^\prime(i) = \begin{bmatrix}
  \sqrt{-2} & \frac{2\sqrt{-2}B}{A}\\
  0 & -\sqrt{-2}
\end{bmatrix},\hspace{1em} \rho^\prime(j) = \begin{bmatrix}
  -B & \frac{D-B^2}{A}\\
  A & B
\end{bmatrix},\hspace{1em} \rho^\prime(ij)= \begin{bmatrix}
  B\sqrt{-2} & \frac{D+B^2}{A}\sqrt{-2}\\
  -A\sqrt{-2} & -B\sqrt{-2}
\end{bmatrix}.\] To find the $\mathbb{Z}$-order of $Q$ corresponding to $\operatorname{Stab}(f_{i,c},\Gamma)$, it suffices to find the conditions on $t$, $x$, $y$, $z$ such that the matrix \[\rho^\prime(t+xi+yj+zij)\] has entries in $\mathcal{O}_2$. As $A$ appears in the denominator and hence affects the integrality conditions, we will split cases according to when $A = 2$, and when $A = 4$.

\subsection{The orders $\mathcal{M}_{(2)}$, $\mathcal{M}_{(5)}$, $\mathcal{M}_{(6)}$} The element $t + xi + yj + zij$ via $\rho^\prime$ is sent to \[\begin{bmatrix}
  t+x\sqrt{-2} - yB + zB\sqrt{-2} & xB\sqrt{-2} + y\frac{D-B^2}{2} + z\frac{D+B^2}{2}\sqrt{-2}\\
  2y-2z\sqrt{-2} & t-x\sqrt{-2} + yB - z\sqrt{-2}B
\end{bmatrix}\] where we calculate each case for $B = 1$, $-\sqrt{-2}$ and $1-\sqrt{-2}$. Each case yields the matrices \[\begin{bmatrix}
  t-y+(x+z)\sqrt{-2} & y\frac{D-1}{2} + (x+\frac{D+1}{2}z)\sqrt{2}\\
  2y - 2z\sqrt{-2} & t+y - (x+z)\sqrt{-2}
\end{bmatrix}\] for $B = 1$, \[ \begin{bmatrix}
  t+2z+(x+y)\sqrt{-2} & 2x + y\frac{D+2}{2} + z(\frac{D-2}{2})\sqrt{-2}\\
  2y - 2z\sqrt{-2} & t-2z-(x+y)\sqrt{-2}
\end{bmatrix}\] for $B = -\sqrt{-2}$ and \[\begin{bmatrix}
  (t-y+2z) + (x+y+z)\sqrt{-2} & 2x + y\frac{D+1}{2} + 2z + (x+y+z\frac{D-1}{2})\sqrt{-2}\\
  2y-2z\sqrt{-2} & t+y-2z - (x+y+z)\sqrt{-2}
\end{bmatrix}\] for $B = 1-2\sqrt{-2}$, respectively. It is immediate that in all three case we must have $t,x,y,z \in \frac{1}{2}\mathbb{Z}$, whence we may rewrite $t+xi+yj+zij$ as \[\frac{1}{2}(a+bi+cj+dij)\] for $a,b,c,d\in\mathbb{Z}$.

For the first case $\mathcal{M}_{(2)}$, we obtain the two integral relations \[
  a + c \equiv b+ d \equiv 0 \mod 2
\] and hence the order has a $\mathbb{Z}$-basis consisting of the elements \[\frac{1+j}{2},\hspace{0.5em} \frac{i}{2},\hspace{0.5em} j ,\hspace{0.5em} ij.\] This can be seen immediately by replacing each $a,b,c,d$ by integer parameters. Through identical methods, in the case $\mathcal{M}_{(5)}$ we obtain the relations \[
  a \equiv b+c \equiv 0 \mod 2
\] and for $\mathcal{M}_{(6)}$ we have \[
  a+c \equiv b+c+d \equiv 0 \mod 2,
\] so in each case we have $\mathbb{Z}$-bases consisting of the elements \[1,\hspace{0.5em} \frac{i+j}{2},\hspace{0.5em} j,\hspace{0.5em} \frac{ij}{2}\] and \[\frac{1+j+ij}{2},\hspace{0.5em} \frac{i+ij}{2},\hspace{0.5em} j,\hspace{0.5em} ij,\] respectively. Following the convention of \cite{Maclachlan_Reid_1991} which includes $1$ and $i$ as generating elements of the $\mathbb{Z}$-orders, we obtain \begin{equation}\mathbb{Z}\left[1, i, \frac{1+j}{2}, \frac{i+ij}{2}\right],\hspace{1em} \mathbb{Z}\left[1,i,\frac{i+j}{2},\frac{ij}{2}\right],\hspace{1em} \mathbb{Z}\left[1,i,\frac{1+j+ij}{2},\frac{i+ij}{2}\right]\end{equation} as explicit descriptions of $\mathcal{M}_{(2)}$, $\mathcal{M}_{(5)}$, and $\mathcal{M}_{(6)}$, respectively.

\subsection{The orders $\mathcal{M}_{(3)}$, $\mathcal{M}_{(4)}$} For $\mathcal{M}_{(3)}$, the element $t + xi + yj + zij$ is sent to \[\begin{bmatrix}
  t+2z + (x+y)\sqrt{-2} & x+y\frac{D+2}{4} + z\frac{D-2}{4}\sqrt{-2}\\
  4y-4z\sqrt{-2} & t-2z-(x+y)\sqrt{-2}
\end{bmatrix}\] and we obtain $t \in \frac{1}{2}\mathbb{Z}$ and $x,y,z\in \frac{1}{4}\mathbb{Z}$, keeping in mind the congruence relations of $D$ enforced by the equations in \ref{vulakh}. Writing $t+xi+yj+zij$ as \[\frac{a}{2} + \frac{1}{4}(bi + cj + dij)\] for $a,b,c,d\in\mathbb{Z}$, we obtain the integral relations \[b + c \equiv 0 \mod 4\] and \[a + d \equiv 0 \mod 2,\] which leads to a $\mathbb{Z}$-basis consisting of the elements \[\frac{2+ij}{4},\hspace{0.5em}\frac{ij}{2},\hspace{0.5em}\frac{i+3j}{4},\hspace{0.5em}j.\] By a change of $\mathbb{Z}$-basis we obtain an explicit description \begin{equation}\mathcal{M}_{(3)} =\mathbb{Z}\left[1,i,\frac{3i+j}{4},\frac{2+ij}{4}\right].\end{equation}

For $\mathcal{M}_{(4)}$, we obtain \[\begin{bmatrix}
  t-2y+2z+(x+y+2z)\sqrt{-2} & x+\frac{D-2}{4}y + 2z + (x+y+\frac{D+2}{4}z)\sqrt{-2}\\
  4y-4z\sqrt{-2} & t+2y-2z-(x+y+2z)\sqrt{-2}
\end{bmatrix}\] from which follows the integral relations \[a + c + d\equiv 0 \mod 2,\] and \[b + c + 2d \equiv 0 \mod 4,\] following the convention of the case $\mathcal{M}_{(3)}$. This leads to a $\mathbb{Z}$-basis consisting of the elements \[\frac{2+2i+ij}{4}, \hspace{0.5em}\frac{i+j+ij}{4},\hspace{0.5em}\frac{ij}{2}, \hspace{0.5em}i\] and after a change of basis, leads to the explicit description \begin{equation}\mathcal{M}_{(4)} = \mathbb{Z}\left[1,i,\frac{i+j+ij}{4},\frac{2+2i+ij}{4}\right].\end{equation}

\section{Volume Formulae}

Let $N(\mathcal{M})$ denote the absolute value of the reduced discriminant of the order $\mathcal{M}$. From \cite{Voight1} we obtain a volume formula for any $\mathbb{Z}$-order $\mathcal{M}$ in $Q$ as\begin{equation}\label{volform}\operatorname{vol}(P\rho(\mathcal{M}^1)\backslash\mathbb{H}^2) = \frac{\pi N(\mathcal{M})}{3}\prod_{p|N(\mathcal{M})}\lambda(\mathcal{M},p)\cdot[\mathbb{Z}_p^\times: \operatorname{nrd}(\mathcal{M}_p^\times)]^{-1}\end{equation} where $P\rho(\mathcal{M}^1)$ is understood to be in $\operatorname{PSL}(2,\mathbb{R})$, and the symbol $\lambda(\mathcal{M},p)$ is defined to be \[\lambda(\mathcal{M},p) = \frac{1-p^{-2}}{1-\left(\frac{\mathcal{M}}{p}\right)p^{-1}}\] where $\left(\frac{\mathcal{M}}{p}\right)$ is the Eichler symbol of $\mathcal{M}$ at $p$. Let $\mathcal{O}$ be the $\mathbb{Z}$-order $\mathbb{Z}[1,i,j,ij]$ in $Q$. By standard computations, the absolute value of the reduced discriminant of $\mathcal{O}$ is $8D$. Using the relation \[N(\mathcal{O}) = [\mathcal{M}:\mathcal{O}]\cdot N(\mathcal{M}),\] we obtain the reduced discriminants of the remaining cases $\mathcal{M}$ of Theorem \ref{mainthm}. For the orders (2), (5), and (6), we have $[\mathcal{M}:\mathcal{O}] = 4$ and hence $N(\mathcal{M}) = 2D$. For (3) and (4) we have $[\mathcal{M}:\mathcal{O}] = 16$, and hence $N(\mathcal{M}) = \frac{D}{2}$.

Observe that for $p \neq 2$, the $\mathbb{Z}_p$-order $\mathcal{M}_p$ for orders $\mathcal{M}$ of Theorem \ref{mainthm} are equal to $\mathcal{O}_p$ as all denominators that occur are powers of 2 and hence invertible in such $\mathbb{Z}_p$. Let $\alpha = t + xi + yj + zij \in \mathcal{O}_p$. As \[\operatorname{nrd}(\alpha) \equiv t^2 + 2x^2 \mod p,\] for any $p\neq 2$ we may always find $\alpha$ such that $\operatorname{nrd}(\alpha) \notin (\mathbb{Z}_p^\times)^2$. To see this, consider the cases where 2 is and is not a quadratic residue modulo $p$, separately. If $2$ is a quadratic residue modulo $p$, the expression is equivalent to a sum of two squares, and modulo $p$ this attains all possible values. If 2 is not a quadratic residue modulo $p$, the value 2 itself would suffice. 

Together with the inclusion of the groups \[(\mathbb{Z}_p^\times)^2 \subset \operatorname{nrd}(\mathcal{O}_p^\times) \subset \mathbb{Z}_p^\times,\] and the fact that for $p \neq 2$ the subgroup $(\mathbb{Z}_p^\times)^2$ of squares in $\mathbb{Z}_p^\times$ is of index 2, we conclude that \[[\mathbb{Z}_p^\times : \operatorname{nrd}(\mathcal{M}_p^\times)] = 1\] for every $p \neq 2$. 

A simplification can be obtained also for the Eichler symbols at $p \neq 2$. As $\mathcal{M}_p = \mathcal{O}_p$ we have \[\left(\frac{\mathcal{M}}{p}\right) = \left(\frac{\mathcal{O}}{p}\right),\] and furthermore by \cite{Voight2} the value of $\left(\frac{\mathcal{O}}{p}\right)$ can be obtained by computing the Kronecker symbol \[\left(\frac{\Delta(\alpha)}{p}\right)\] for $\alpha \in \mathcal{O}$ where \[\Delta(\alpha) := \operatorname{trd}(\alpha)^2 - 4\operatorname{nrd}(\alpha).\] Namely, for a placeholder $\epsilon\in \{-1,0,1\}$, the Eichler symbol of an order $\mathcal{M}$ is \[\left(\frac{\mathcal{M}}{p}\right) = \epsilon \text{  if and only if } \left(\frac{\Delta(\alpha)}{p}\right) = 0 \text{ or }\epsilon\] for every $\alpha \in \mathcal{M}$. As in our case \[\Delta(\alpha) = -8x^2 + 4Dy^2 + 8Dz^2,\] and since we are looking at primes $p\neq 2$ that divide $D$, the corresponding Kronecker symbol turns out to be \[\left(\frac{-2x^2}{p}\right) = 0 \text{ or } \left(\frac{-2}{p}\right)\] depending on the value of $x$ modulo $p$. Therefore we may replace the Eichler symbol $\left(\frac{\mathcal{M}}{p}\right)$ with the Legendre symbol $\left(\frac{-2}{p}\right)$ in the case $p \neq 2$, and the expression of $\lambda(\mathcal{M},p)$ simplifies into \[\lambda(\mathcal{M},p) = 1+\left(\frac{-2}{p}\right)p^{-1}\] by cancelling out the term $1-\left(\frac{-2}{p}\right)p^{-1}$. In summary, Equation \ref{volform} simplifies into \begin{equation}
  \operatorname{vol}(P\rho(\mathcal{M}^1)\backslash\mathbb{H}^2) = \frac{\pi N(\mathcal{M}) \cdot\lambda(\mathcal{M},2)}{3\cdot[\mathbb{Z}_2^\times:\operatorname{nrd}(\mathcal{M}_2^\times)]}\prod_{p|N(\mathcal{M})}\left(1+\left(\frac{-2}{p}\right)p^{-1}\right)
\end{equation} whenever $2 | N(\mathcal{M})$, noticing that $\left(\frac{-2}{2}\right)=0$, and into \begin{equation}
  \operatorname{vol}(P\rho(\mathcal{M}^1)\backslash\mathbb{H}^2) = \frac{\pi N(\mathcal{M})}{3}\prod_{p|N(\mathcal{M})} \left(1+\left(\frac{-2}{p}\right)p^{-1}\right)
\end{equation} when $2 \nmid N(\mathcal{M})$. Therefore, except the cases (3) and (4) of Theorem \ref{mainthm} where 2 does not divide $N(\mathcal{M})$, we must calculate both the Eichler symbols at $p = 2$, and the indices of the reduced norm groups at $p = 2$ to obtain the precise volume formulae.

\subsection{Computing the Eichler symbol of $\mathcal{M}$ at $p=2$}\label{eichler} We compute the Eichler symbols of the orders $\mathcal{M}$ of (1), (2), (5) and (6) in Theorem \ref{mainthm} by computing $\left(\frac{\Delta(\alpha)}{2}\right)$ for $\alpha \in \mathcal{M}$. Recall that we denote the (i)-th order of Theorem \ref{mainthm} as $\mathcal{M}_{(\text{i})}$. For $\alpha = t+ ix +jy +ijz \in \mathcal{M}_{(1)}$, we have \[\Delta(\alpha) = -8x^2+4Dy^2+8Dz^2\] which is divisible by 2 regardless of the value of $D$ and hence $\left(\frac{\Delta(\alpha)}{2}\right) = 0$. Therefore, \[\lambda(\mathcal{M}_{(1)},2) = \frac{3}{4}.\] For $\alpha = t + xi + y\frac{1+j}{2} + z\frac{i+ij}{2} \in \mathcal{M}_{(2)}$ we have \[\Delta(\alpha) = -8x^2 - 8xz - 2z^2 + Dy^2 + 2Dz^2,\] but as $D \equiv 1 \mod 4$ this value is congruent to $Dy^2$ modulo 8. Hence the symbol may depend on whether $D \equiv 1$ or $5$ mod 8. In the case $D \equiv 1\mod 8$, this becomes \[\Delta(\alpha) \equiv y^2 \mod 8,\] and $\left(\frac{\Delta(\alpha)}{2}\right)$ can either be $0$ or $1$, depending on the value of $y$. When $D \equiv 5 \mod 8$, the symbol $\left(\frac{\Delta(\alpha)}{2}\right)$ is 0 or $-1$ depending on the value of $y$. In summary, \[\lambda(\mathcal{M}_{(2)},2) = \begin{cases}
  \frac{3}{2} & \text{when}\hspace{0.5em} D \equiv 1 \mod 8,\\
  \frac{1}{2} & \text{when}\hspace{0.5em} D \equiv 5 \mod 8.
\end{cases}\] For $\alpha = t + xi + y\frac{i+j}{2} + z\frac{ij}{2}\in\mathcal{M}_{(5)}$, we have \[\Delta(\alpha) = -2(2x+y)^2 + Dy^2 + 2Dz^2,\] but in this case $D \equiv 2 \mod 4$, so $\Delta(\alpha)$ is even. Therefore, \[\lambda(\mathcal{M}_{(5)},2) = \frac{3}{4}.\] For $\alpha = t + xi + y\frac{1+j+ij}{2} + z\frac{i+ij}{2} \in \mathcal{M}_{(6)}$, we have \[\Delta(\alpha) \equiv (D-2)y^2 + 4yz + 2z^2 \mod 8,\] which is even when $y$ is. However if $y$ is odd, then depending on whether $D \equiv 3$ or $7 \mod 8$, the value of $\Delta(\alpha)$ can be either \[\Delta(\alpha) \equiv y^2+4yz+2z^2 \text{, or  }  5y^2 + 4yz+2z^2,\] modulo 8. First assume that $D \equiv 3 \mod 8$. In the case that $z$ is even, we obtain \[\Delta(\alpha) \equiv y^2 \mod 8,\] which is 1 modulo 8 as $y$ is odd. In the case that $z$ is odd, we obtain \[\Delta(\alpha) \equiv (y+2z)^2-z^2-z^2 \equiv -1 \mod 8,\] keeping in mind that the only odd square modulo 8 is 1. Hence $\Delta(\alpha) \equiv \pm 1 \mod 8$, and \[\lambda(\mathcal{M}_{(6)},2) = \frac{3}{2}\hspace{1em}\text{when $D \equiv 3\mod 8$}.\] On the other hand, for $D \equiv 7 \mod 8$, we have either \[\Delta(\alpha) \equiv 5y^2 \equiv -3\mod 8\] or \[\Delta(\alpha) \equiv (2y+z)^2+y^2+z^2 \equiv 3 \mod 8,\] depending on whether $z$ is even or odd. Hence $\Delta(\alpha) \equiv \mp 3\mod 8$, and \[\lambda(\mathcal{M}_{(6)},2) = \frac{1}{2} \hspace{1em}\text{when $D \equiv 7 \mod 8$.}\] This completes the computation of the Eichler symbols at $p=2$.

\subsection{Computing the Index of the Reduced Norm Group at $p = 2$}\label{nrd} Unlike odd primes $p$ where the index of the square unit group in $\mathbb{Z}_p^\times$ is of index 2, the square unit group in $\mathbb{Z}_2^\times$ is of index 4, and more precisely $\mathbb{Z}_2^\times/(\mathbb{Z}_2^\times)^2$ is isomorphic to the Klein four group $V_4$. This is because there are three intermediate subgroups \[1,3+8\mathbb{Z}_2, \hspace{1em} 1,5+8\mathbb{Z}_2,\hspace{1em} 1,7+8\mathbb{Z}_2\] of index 2 between $(\mathbb{Z}_2^\times)^2 = 1 + 8\mathbb{Z}_2$ and $\mathbb{Z}_2^\times = 1 + 2\mathbb{Z}_2$. Therefore $\operatorname{nrd}(\mathcal{M}_2^\times)$ attaining a nonsquare invertible value modulo 8 is not enough to ensure that $\operatorname{nrd}(\mathcal{M}_2^\times) = \mathbb{Z}_2^\times$, and we must calculate all possible values of $\operatorname{nrd}$ modulo 8.

Let $\alpha = t + xi + yj + zij \in \mathcal{M} = \mathcal{M}_{(1)}$. It follows that \[\operatorname{nrd}(\alpha) = t^2 + 2x^2 + 7Dy^2 + 6Dz^2,\] and we examine the values modulo 8. In the case that $D \equiv 0 \mod 8$, this becomes \[ t^2 + 2x^2 \mod 8,\] and for this to be a unit in $\mathbb{Z}_2$ we must have $t$ odd. Hence in this case the group $\operatorname{nrd}(\mathcal{M}_2^\times) = 1,3 + 8\mathbb{Z}_2$ and is of index 2 in $\mathbb{Z}_2^\times$. When $D \not\equiv 0 \mod 8$, however, the reduced norm of $1+2j$ is \[ 1 + 4D \mod 8\] and notice that for odd $D$ this value is always 5 modulo 8. Also, the reduced norm of $1+ij$ is \[ 1+6D \mod 8,\] which for $D \equiv 2,6$ mod $8$ turns out to be $5$ mod $8$. For $D \equiv 4$ mod $8$ notice that the reduced norm of $1+ij$ is \[ 1+7D \equiv 5 \mod 8.\] In summary,\[[\mathbb{Z}_2^\times:\operatorname{nrd}(\mathcal{M}_2^\times)]=\begin{cases}
  2 & \text{when $D \equiv 0 \mod 8$}\\
  1 & \text{otherwise.}
\end{cases}\] For $\mathcal{M}_{(2)}, \mathcal{M}_{(5)}$ and $\mathcal{M}_{(6)}$, as the reduced norms of the elements \[1+i,\hspace{0.5em} 1+j,\hspace{0.5em} 1+2j \hspace{0.5em}\text{ and }\hspace{0.5em} 1+ij\] are \[3,\hspace{0.5em} 1+7D,\hspace{0.5em} 1+4D\hspace{0.5em} \text{and}\hspace{0.5em} 1+6D\hspace{0.5em} \text{modulo}\hspace{0.5em} 8,\] respectively, one can verify that in each order there are at least two elements whose reduced norms modulo 8 are distinct and are among $3, 5$ and $7$, regardless of the congruence class of $D$ modulo 8. Hence the index $[\mathbb{Z}_2^\times: \operatorname{nrd}(\mathcal{M}_2^\times)]$ is always 1 in these cases. These results combined with Section \ref{eichler} prove Theorem \ref{volumes}.

\begin{remark}
  The analysis of this section complements the proof of Lemma 3.7 of \cite{JUNG2019160}, where the values of $\operatorname{nrd}(\mathcal{M}_2^\times)$ were computed modulo only up to 4. It is likely that this also caused the error in Humbert's formula as mentioned in \cite{Maclachlan_Reid_1991}. Nonetheless, the lemma of \cite{JUNG2019160} stays valid which can be seen by further computation.
\end{remark}

\subsection{Counting Prime Geodesic Surfaces}

Using the volume formulae of Theorem \ref{volumes}, it is possible to prove a prime geodesic theorem for surfaces in the hyperbolic 3-fold $\Gamma\backslash\mathbb{H}^3$ using a simple analytic lemma given below:

\begin{lemma}[\cite{JUNG2019160}, 3.10]\label{analyticlemma}
  For any integer $r$, the function w.r.t. $X$ \[\# \{D \equiv r \mod 2^a \mid F(D) < X\}\] is asymptotic to $\frac{C}{2^a}X$ as $X \to \infty$, where the constant $C = \prod_{p}\left(1-\frac{1}{p}+\frac{1}{p+\left(\frac{-2}{p}\right)}\right)$, and $F(D)$ is as in Theorem \ref{volumes}.
\end{lemma}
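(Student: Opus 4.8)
The plan is to encode the count in a Dirichlet series and extract the asymptotics from a Tauberian theorem, following the strategy of Lemma 3.10 of \cite{JUNG2019160}. Throughout, write $w_p := \left(\frac{-2}{p}\right) \in \{0, \pm 1\}$, so that $w_2 = 0$. The first point is that $F$ is multiplicative and integer-valued: from $F(D) = \prod_{p \mid D} p^{v_p(D) - 1}(p + w_p)$ one reads off $F(D_1 D_2) = F(D_1)F(D_2)$ for coprime $D_1, D_2$, $F(1) = 1$, and $F(2^v) = 2^v$. Since $F(D) \ge \phi(D) \gg D/\log\log D$, each $a_n := \#\{D \ge 1 : F(D) = n\}$ is a finite nonnegative integer, and $\#\{D : F(D) < X\} = \sum_{n < X} a_n$ is a partial sum of the coefficients of the Dirichlet series $\zeta_F(s) := \sum_{D \ge 1} F(D)^{-s} = \sum_n a_n n^{-s}$, which converges absolutely for $\Re s > 1$.

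The key computation is the Euler product. Using $F(p^k) = p^{k-1}(p + w_p)$ for $k \ge 1$, a short calculation gives, for $\Re s > 1$, the factorization $\zeta_F(s) = \zeta(s)G(s)$ with $G(s) = \prod_p \bigl((1 - p^{-s}) + (p + w_p)^{-s}\bigr)$. Writing the $p$-th factor as $1 + \bigl((p + w_p)^{-s} - p^{-s}\bigr)$ and noting $(p+w_p)^{-s} - p^{-s} = O_s(p^{-\sigma - 1})$ uniformly on compacta, the product defining $G$ converges absolutely and locally uniformly for $\Re s > 0$; hence $G$ is holomorphic there, and $G(1) = \prod_p \bigl(1 - \tfrac1p + \tfrac1{p + w_p}\bigr) = C$ by definition of $C$ (the factor at $p = 2$ being $1$ since $w_2 = 0$). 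So $\zeta_F$ continues to $\Re s > 0$ with only a simple pole at $s = 1$, of residue $C$, and in particular $\zeta_F(s) - C/(s-1)$ is continuous on $\Re s \ge 1$. By the Wiener--Ikehara theorem applied to the nonnegative coefficients $a_n$, $\#\{D : F(D) < X\} \sim CX$, which is the lemma when $a = 0$.

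For $a \ge 1$ I would reduce to an odd-argument sum by writing $D = 2^v e$ with $e$ odd, so that $F(D) = 2^v F(e)$. If $v_2(r) = v < a$, say $r = 2^v r'$ with $r'$ odd, then $D \equiv r \pmod{2^a}$ is equivalent to $v_2(D) = v$ together with $e \equiv r' \pmod{2^{a-v}}$; if $2^a \mid r$, then $D \equiv 0 \pmod{2^a}$ forces $v \ge a$. Either way the problem reduces to $\sum_{e \text{ odd},\, e \equiv r'\,(2^b)} F(e)^{-s}$, which by orthogonality of Dirichlet characters modulo $2^b$ equals $\frac{1}{\phi(2^b)} \sum_{\chi \bmod 2^b} \overline{\chi}(r') L_F(s, \chi)$ with $L_F(s, \chi) := \sum_{e \text{ odd}} \chi(e) F(e)^{-s}$. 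Repeating the Euler-product manipulation gives $L_F(s, \chi) = L(s, \chi) G_\chi(s)$ where $G_\chi(s) = \prod_{p \text{ odd}} \bigl(1 + \chi(p)((p + w_p)^{-s} - p^{-s})\bigr)$ is again holomorphic for $\Re s > 0$. For nontrivial $\chi$ the factor $L(s, \chi)$ is entire, so those terms are harmless on $\Re s \ge 1$; for the principal character $\chi_0 \bmod 2^b$ one has $L(s, \chi_0) = \zeta(s)(1 - 2^{-s})$, so $L_F(s, \chi_0)$ has a simple pole at $s = 1$ of residue $\tfrac12 G_{\chi_0}(1) = \tfrac12 C$ (the product $G_{\chi_0}(1)$ over odd $p$ again being $C$, because the $p=2$ factor of $C$ is $1$). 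Carrying along the prefactor $2^{-vs}$ — respectively $\tfrac{2^{-as}}{1 - 2^{-s}}$, which on multiplying by $\sum_{e \text{ odd}} F(e)^{-s} = \zeta(s)(1 - 2^{-s}) G_{\chi_0}(s)$ collapses to $2^{-as} \zeta(s) G_{\chi_0}(s)$ — one checks that in every case $\sum_{D \equiv r\,(2^a)} F(D)^{-s}$ is holomorphic on $\Re s \ge 1$ apart from a simple pole at $s = 1$ of residue exactly $C/2^a$. Wiener--Ikehara once more (the coefficients $\#\{D \equiv r\,(2^a) : F(D) = n\}$ being nonnegative) gives $\#\{D \equiv r\,(2^a) : F(D) < X\} \sim \frac{C}{2^a}X$, and replacing ``$< X$'' by ``$\le X$'' changes nothing by monotonicity.

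The main obstacle is not analytic but combinatorial bookkeeping in the last step: one must verify that the powers of $2$ coming from the prefactor $2^{-vs}$, from $\phi(2^{a-v}) = 2^{a-v-1}$, and from the residue $\tfrac12$ of the principal-character piece combine to give precisely $C/2^a$ rather than an off-by-a-power-of-$2$ constant, and that the three subcases $v_2(r) < a$, $2^a \mid r$, and $a = 0$ all reconcile. The analytic inputs — holomorphy of $G$ and of each $G_\chi$ on a region containing $\Re s \ge 1$, the simple pole of $\zeta$, and the fact that $L(s, \chi)$ is entire for nontrivial $\chi$ — are all classical, after which Wiener--Ikehara applies directly. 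Up to replacing the relevant character by $\left(\frac{-2}{\cdot}\right)$, this is the argument of Lemma 3.10 in \cite{JUNG2019160}.
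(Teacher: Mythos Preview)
The paper does not prove this lemma at all; it merely states it with a citation to \cite{JUNG2019160}, Lemma~3.10, and uses it as a black box. Your argument is precisely the Dirichlet-series/Wiener--Ikehara approach of that reference, adapted to the character $\left(\frac{-2}{\cdot}\right)$, so in substance you have supplied the proof the paper omits rather than taken a different route.

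The details check out: the Euler factorization $\zeta_F(s)=\zeta(s)G(s)$ with $G(s)=\prod_p\bigl(1-p^{-s}+(p+w_p)^{-s}\bigr)$ is correct, the bound $(p+w_p)^{-s}-p^{-s}=O_s(p^{-\sigma-1})$ gives holomorphy of $G$ on $\Re s>0$, and $G(1)=C$ with the $p=2$ factor equal to~$1$. In the congruence reduction your bookkeeping is right: for $v=v_2(r)<a$ one gets residue $2^{-v}\cdot\phi(2^{a-v})^{-1}\cdot\tfrac12\,C=2^{-v}\cdot 2^{-(a-v-1)}\cdot 2^{-1}C=C/2^a$, and for $2^a\mid r$ the geometric sum over $v\ge a$ collapses the factor $(1-2^{-s})$ against the one in $L(s,\chi_0)$, giving $2^{-as}\zeta(s)G_{\chi_0}(s)$ with residue $C/2^a$ again. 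Applying Wiener--Ikehara to the full (nonnegative-coefficient) series $\sum_{D\equiv r\,(2^a)}F(D)^{-s}$, rather than to the individual character pieces, is the correct move.
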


Let $\Pi(x)$ denote the number of primitive immersed totally geodesic surfaces in the orbifold $\Gamma\backslash \mathbb{H}^3$ having area less than $x$. As such surfaces are in correspondence with $F$-subgroups of $\Gamma$, we may write the function $\Pi(x)$ as \[\Pi(x) = \sum_{k = 1}^6\# \{D \mid \operatorname{vol}(\mathcal{M}_{(k)}^1/\{\pm 1\}) < x\}\] where by the results of Theorem \ref{volumes}, we can write each summand in the form of \[\# \{D \mid \operatorname{vol}(\mathcal{M}_{(k)}^1/\{\pm 1\}) < x\} = \sum_i \#\{D \equiv r_i \mod 2^{a_i} \mid c_i \cdot \pi F(D) < x\}\] for corresponding values of $r_i, a_i$ and $c_i$. 
Applying Lemma \ref{analyticlemma}, we have \[\# \{D \equiv r_i \mod 2^{a_i} \mid c_i \cdot \pi F(D) < x\} \sim \frac{C}{\pi 2^{a_i}c_i}x\] as $x \to \infty$, and the sum of the corresponding asymptotic coefficients yields \[\Pi(x) \sim \frac{45C}{16\pi}x.\] This proves the identity \ref{asymp}.

\bibliographystyle{alpha}
\bibliography{references}
\end{document}